\newtheorem{theorem}{Theorem}
\newtheorem{corollary}{Corollary}
\newtheorem{definition}{Definition}
\newtheorem{lemma}{Lemma}
\newtheorem{remark}{Remark}
\newtheorem{assumption}{Assumption}
\newtheorem{proposition}{Proposition}
\begin{document}

\title{ $\mu$-Stability of Nonlinear Positive Systems With Unbounded Time-Varying Delays}

\author{Tianping~Chen,~{\it{Senior~Member,~IEEE,}}
        and~Xiwei~Liu,~{\it{Member,~IEEE}}
\thanks{Tianping Chen is with the School of Computer Sciences/Mathematical
Sciences, Fudan University, 200433, Shanghai, P.R. China. E-mail:
tchen@fudan.edu.cn}
\thanks{Xiwei Liu is with Department of Computer Science and
Technology, Tongji University, and with the Key Laboratory of Embedded System and Service Computing,
Ministry of Education, Shanghai 200092, China. E-mail:
xwliu@tongji.edu.cn}
\thanks{This work was supported by
the National Science Foundation of China under Grant No. 61273211, 61203149, 61233016, the National Basic Research Program of China (973 Program) under
Grant No. 2010CB328101, ``Chen Guang'' project supported by Shanghai
Municipal Education Commission and Shanghai Education Development
Foundation under Grant No. 11CG22, the Fundamental Research Funds
for the Central Universities, and the Program
for Young Excellent Talents in Tongji University.}
}

\maketitle

\begin{abstract}
\boldmath {\bf Stability of the zero solution plays an important role in the investigation of positive systems. In this note, we revisit the $\mu$-stability of positive nonlinear systems with unbounded time-varying delays. The system is modelled by continuous-time differential equations. Under some assumptions on the nonlinear functions like homogeneous, cooperative, nondecreasing, we propose a novel transform, by which the nonlinear system reduces to a new system. Thus, we analyze its dynamics, which can simplify the nonlinear homogenous functions with respect to (w.r.t.) arbitrary dilation map to those w.r.t. the standard dilation map. We finally get some criteria for the global $\mu$-stability. A numerical example is given to demonstrate the
validity of obtained results.}
\end{abstract}

\begin{IEEEkeywords}
\boldmath {\bf Nonlinear, positive systems, time delay, unbounded, $\mu$-stability.}
\end{IEEEkeywords}

\section{Introduction}
In many cases of the real world, states of systems should be always non-negative, for example, population levels in biology \cite{population}, industrial processes involving chemical reactors \cite{chemical}, transport and accumulation phenomena of substances in compartmental systems \cite{JS1993}-\cite{HC2004}. All these examples can be described mathematically by the model of \emph{positive systems} \cite{L1979}-\cite{FR2000}.

In the literature of positive systems, the zero solution takes an important position, for example, the zero solution means the death of all species in biosphere. Moreover, although many important and interesting properties have been reported and analyzed, the most fundamental one is the stability property, which has been comprehensively studied by researchers from different fields.

For a linear system to be positive $\dot{x}(t)=Ax(t)$, which is called positive linear time-invariant (LTI) system, a necessary and sufficient condition is that $A$ should be a Metzler matrix (i.e., off-diagonal entries are nonnegative). For positive linear time-delay systems, \cite{LA1980a}-\cite{LA1980b} show that the presence of constant delays does not affect the stability performance of the system; \cite{R2009}-\cite{LYW2010} (and \cite{NNSM2008}-\cite{NMNSN2009}) also report this fact for positive linear systems (and positive systems defined by functional and integro differential equations) of time-varying delays; \cite{N2013} present some criteria for exponential stability of positive LTI differential systems with distributed delay; \cite{N2006} gives an extension of the classical Perron-Frobenius theorem to positive quasi-polynomial matrices, then some necessary and sufficient conditions for the exponential stability of positive linear time-delay differential systems are obtained; \cite{LYW2009} addresses the asymptotic stability of discrete-time positive systems with bounded time-varying delays and proves that the stability is also determined by the delay-free systems; \cite{LL2013} establishes an equivalent relationship between asymptotical stability and exponential stability for discrete-time positive system for all bounded time-varying delays; \cite{FCJ2013} studies the asymptotic stability and decay rates with unbounded delays; and for positive linear switched system (PLS), \cite{MS2003}-\cite{FMC2009} analyze and prove that for 2-dimensional PLS, the necessary and sufficient condition for stability under arbitrary switching is that every matrix in the convex hull of the matrices defining the subsystems is Hurwitz, but is not true for $3$-dimensional PLS; and \cite{LD2011} also addresses the stability problem of both discrete-time and continuous-time PLSs with arbitrary (even unbounded) time delays.

However, all these above papers study only the linear systems, which is just a small fraction of the whole positive systems. Recently, some great progress on the homogeneous systems \cite{H1967} has been made, which is a particular class of nonlinear systems. To overcome this disadvantage, there have been some excellent works on nonlinear positive systems in the literature. For example, \cite{AL2002} is a pioneering work on generalizing the corresponding analysis from positive linear systems to homogeneous cooperative and irreducible systems; \cite{MV2009} shows that a constant-delayed homogeneous cooperative system is globally asymptotically stable (GAS) for all non-negative delays if and only if the system is GAS for zero delay; \cite{BMV2010} shows that GAS and cooperative systems, homogeneous of any order with respect to arbitrary dilation maps are D-stable with constant time delay; \cite{FCJ2014a} investigates the exponential stability of homogeneous positive systems of degree one with bounded time-varying delays; and \cite{FCJ2014b} generalizes the homogeneous positive systems to any degree, and the bounded time-varying delay to be unbounded, the asymptotic and $\mu$-stability are discussed for both continuous-time and discrete-time systems.

In all, there are few papers considering the stability with unbounded time delay, see \cite{LD2011} and \cite{FCJ2014b}. However, in our previous papers \cite{CW2007a}-\cite{CW2007b}, we are the first to give explicitly the definition of $\mu$-stability, which includes the exponential stability, the power-rate stability, the log stability, the log-log stability, etc. The key concept of defining a new Lyapunov function can even be retrieved in the first author's paper \cite{Chen2001}. Moreover, the proposed theory is successfully applied on the stability of equilibrium for neural networks \cite{LC2008}-\cite{WC2012}, synchronization \cite{CWZ08} and consensus \cite{LLC2010} for complex networks. Therefore, in this note, {we will revisit the $\mu$-stability of nonlinear positive systems with unbounded time-varying delays using our proposed method, by defining a simple transform of the variables, we can directly get the corresponding criteria, which are easier to read and understand than previous works \cite{CW2007b}.}

The rest of this note is organized as follows. In section \ref{pre}, some necessary definitions, lemmas and notations are given. In section \ref{transform}, a simple transform is introduced and three properties are obtained. In section \ref{main}, by transforming the original system to a simple form, whose nonlinear functions are homogeneous w.r.t. the standard dilation map, we analyze the $\mu$-stability. Moreover, a numerical example to show the process of transform and analysis by using the obtained theoretical results is carefully demonstrated in Section \ref{num}.
Finally, the note is concluded in section \ref{conclude}.

\section{Preliminaries}\label{pre}
Let $\mathbb{R} (\mathbb{R}_{+}), \mathbb{R}^n_{+}$ denote the field of real (positive) numbers and
the nonnegative orthant of all $n$-dimensional real space $\mathbb{R}^n$ respectively. If all elements of matrix A are non-negative (non-positive), then denote matrix $A\succeq 0 (\preceq 0)$.
A real $n\times n$ matrix $A=(a_{ij})$, denoted as $A\in R^{n\times n}$, is \emph{Metzler} if and only if its off-diagonal entries $a_{ij}, \forall i\ne j$ belong to $\mathbb{R}^{+}$. For two vectors $x$ and $y$ in $\mathbb{R}^n$, if $x_i\ge y_i, i=1,\cdots,n$, where $x_i$ ($y_i$) is used to denote the $i$-th component of $x$ ($y$), then we denote $x\ge y$.

\begin{definition}
A dynamical system with state space $\mathbb{R}^n$ is \emph{positive} if any trajectory of the system starting at an initial state in the positive orthant $\mathbb{R}^n_{+}$ remains forever in $\mathbb{R}^n_{+}$.
\end{definition}

The following definitions of cooperative functions, homogeneous functions and nondecreasing functions are widely used in the related works of investigating the stability of positive nonlinear systems with (or without time delays), like \cite{FCJ2014b}. Here we rewrite it to keep the self-integrity of this note.

\begin{definition}
A continuous vector field $f: \mathbb{R}^n\rightarrow \mathbb{R}^n$ which is continuously differentiable on $\mathbb{R}^n\backslash\{0\}$ is said to be \emph{cooperative} if the Jacobian matrix $\partial{f}/\partial{x}$ is Metzler for all $x\in \mathbb{R}^n_{+}\backslash\{0\}$.
\end{definition}

\begin{proposition} (see \cite{S1995})
Let $f: \mathbb{R}^n\rightarrow \mathbb{R}^n$ be cooperative. For any two vectors $x$ and $y$ in $\mathbb{R}^n_{+}$ with $x_i=y_i$ and $x\ge y$, we have $f_i(x)\ge f_i(y)$.
\end{proposition}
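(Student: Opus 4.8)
The plan is to connect the two points by the straight-line segment and to show that the $i$-th component of $f$ is nondecreasing along it; this is the standard Kamke-type monotonicity argument for cooperative (quasi-monotone) vector fields. Concretely, I would set $\gamma(t) = y + t(x - y)$ for $t \in [0,1]$, so that $\gamma(0) = y$ and $\gamma(1) = x$. The crucial observation is that, because $x_i = y_i$, the $i$-th coordinate is frozen along the whole path: $\gamma_i(t) = y_i + t(x_i - y_i) = y_i$ for every $t$. Moreover, since $x, y \in \mathbb{R}^n_+$ and $x \ge y$, each $\gamma(t)$ lies in $\mathbb{R}^n_+$ as a convex combination of nonnegative vectors.

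Next I would differentiate $t \mapsto f_i(\gamma(t))$ by the chain rule:
\begin{equation}
\frac{d}{dt} f_i(\gamma(t)) = \sum_{j=1}^n \frac{\partial f_i}{\partial x_j}(\gamma(t)) \, (x_j - y_j).
\end{equation}
The $j = i$ term vanishes identically because $x_i - y_i = 0$, so only the off-diagonal terms $j \ne i$ survive. For those, cooperativity gives $\partial f_i / \partial x_j \ge 0$ (the Jacobian is Metzler), while $x \ge y$ gives $x_j - y_j \ge 0$; hence every surviving summand is nonnegative and $\frac{d}{dt} f_i(\gamma(t)) \ge 0$. Integrating over $[0,1]$ then yields $f_i(x) - f_i(y) = \int_0^1 \frac{d}{dt} f_i(\gamma(t))\,dt \ge 0$, which is exactly the claim.

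The one genuinely delicate point is that $f$ is only guaranteed to be continuously differentiable on $\mathbb{R}^n \setminus \{0\}$, so the chain-rule computation is valid only where $\gamma(t) \ne 0$; the hard part is ruling out the origin. I would dispose of this as follows. If $x = 0$, then $x \ge y \ge 0$ forces $y = 0$ and the statement is trivial. Otherwise, for any $t \in (0,1]$ the convex combination $\gamma(t) = (1-t)y + tx$ can vanish only if $x = y = 0$ (both coefficients being positive and the vectors nonnegative), so the segment meets the origin at most at the single endpoint $t = 0$. Thus the derivative bound holds on the open interval, and I would integrate over $[\varepsilon, 1]$ and let $\varepsilon \to 0^+$, invoking the continuity of $f$ on all of $\mathbb{R}^n$ to pass to the limit $f_i(\gamma(\varepsilon)) \to f_i(y)$. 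This closes the gap and completes the argument.
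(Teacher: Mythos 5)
Your proof is correct and follows essentially the same route as the paper, which simply remarks that cooperativity makes each $f_i$ nondecreasing in every $x_j$ with $j\ne i$ and refers to \cite{S1995}; your line-segment/chain-rule argument is the standard rigorous version of exactly that observation. The extra care you take at the origin (where $f$ need not be $C^1$) is a genuine refinement the paper glosses over, but it does not change the underlying approach.
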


This Proposition can be obtained by the fact that for a cooperative function $f$, each $f_i(x)$ is a nondecreasing function of each $x_j$ for $j\ne i$.


\begin{definition}\label{delta}
For an $n$-tuple $r=(r_1,\cdots,r_n)$ of positive real numbers and $\lambda>0$, the \emph{dilation map} $\delta_{\lambda}^{r}(x)$ is defined by
\begin{align*}
\delta_{\lambda}^{r}(x)=(\lambda^{r_1}x_1,\cdots,\lambda^{r_n}x_n).
\end{align*}
When $r=(1,1,\cdots,1)$, then dilation map is the \emph{standard dilation map}.
\end{definition}

\begin{definition}
A vector field $f(x), x\in \mathbb{R}^n$, is \emph{homogeneous of degree $p\ge 0$ w.r.t. the {dilation map} $\delta_{\lambda}^{r}(x)$} if
\begin{align}\label{wrt}
f(\delta_{\lambda}^{r}(x))=\lambda^{p}\delta_{\lambda}^{r}(f(x))
\end{align}
\end{definition}

\begin{definition}
A vector field $g: \mathbb{R}^n\rightarrow \mathbb{R}^n$ is said to be \emph{nondecreasing on $\mathbb{R}^n_{+}$}, if for any $x,y\in \mathbb{R}^n_{+}$, and $x\ge y$, then $g(x)\ge g(y)$.
\end{definition}

\section{A simple but useful transform}\label{transform}
In this section, we first define a simple transform as:
\begin{align}\label{tf}
z_i=x_i^{1/r_i},
\end{align}
where $r_i, i=1,\cdots,n$ is defined by the dilation map in Definition \ref{delta}. For any function $f$, we can define a new function $\overline{f}(z)$, where $z=(z_1,\cdots,z_n)$, as:
\begin{align}\label{nf}
\overline{f}_i(z)=\left\{\begin{array}{ll}\frac{f_i(z_1^{r_1},\cdots,z_m^{r_m})}{z_i^{r_i-1}}
=\frac{f_i(x)}{z_i^{r_i-1}},&z_i\ne0\\0,&z_i=0\end{array}\right.
\end{align}

Next, we will present three good properties for this new function $\overline{f}(z)$ and prove them in the form of lemmas.

\begin{lemma}\label{l1}
If $f(x), x\in \mathbb{R}^n$ is {homogeneous of degree $p\ge 0$ w.r.t. the {dilation map} $\delta_{\lambda}^{r}(x)$}, then $\overline{f}(z), z\in \mathbb{R}^n$ is homogeneous of degree $p$ w.r.t. the {standard dilation map}.
\end{lemma}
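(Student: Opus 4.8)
The plan is to verify the homogeneity condition for $\overline{f}$ directly from its definition \eqref{nf}, reducing everything to the componentwise form of the two homogeneity statements. First I would unwind what the conclusion means: homogeneity of $\overline{f}$ of degree $p$ w.r.t.\ the standard dilation map says, componentwise, that $\overline{f}_i(\lambda z_1,\dots,\lambda z_n)=\lambda^{p+1}\,\overline{f}_i(z)$ for every $i$ and every $\lambda>0$ (the extra factor of $\lambda$ beyond $\lambda^p$ comes from the dilation applied to $\overline{f}(z)$ itself). Likewise, the hypothesis that $f$ is homogeneous of degree $p$ w.r.t.\ $\delta_{\lambda}^{r}$ unwinds, using \eqref{wrt}, to the scalar identity $f_i(\lambda^{r_1}x_1,\dots,\lambda^{r_n}x_n)=\lambda^{p+r_i}f_i(x)$. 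So the whole lemma reduces to matching these two scalar relations through the defining formula \eqref{nf}.

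The key computation is then to substitute $w=(\lambda z_1,\dots,\lambda z_n)$ into \eqref{nf}. The crucial observation is that the inner argument transforms cleanly: $w_j^{r_j}=(\lambda z_j)^{r_j}=\lambda^{r_j}z_j^{r_j}=\lambda^{r_j}x_j$, so the numerator of $\overline{f}_i(w)$ becomes $f_i(\lambda^{r_1}x_1,\dots,\lambda^{r_n}x_n)$, to which I apply the hypothesis to obtain $\lambda^{p+r_i}f_i(x)$. The denominator is $w_i^{r_i-1}=\lambda^{r_i-1}z_i^{r_i-1}$. Dividing, the powers of $\lambda$ combine as $(p+r_i)-(r_i-1)=p+1$, leaving $\lambda^{p+1}\,f_i(x)/z_i^{r_i-1}=\lambda^{p+1}\,\overline{f}_i(z)$, exactly the desired identity. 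This is the entire content of the lemma, and it shows that the normalization by $z_i^{r_i-1}$ in \eqref{nf} is precisely engineered to absorb the coordinate-dependent weight $r_i$ and convert the arbitrary dilation into the standard one.

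Finally I would dispose of the boundary case $z_i=0$ separately, since \eqref{nf} is defined piecewise. When $z_i=0$, because $\lambda>0$ the scaled coordinate $\lambda z_i$ is also $0$, so by definition $\overline{f}_i(\lambda z)=0=\lambda^{p+1}\cdot 0=\lambda^{p+1}\,\overline{f}_i(z)$ and the identity holds trivially. I do not anticipate a genuine obstacle here; the proof is essentially a one-line exponent bookkeeping. The only point requiring care is keeping the two homogeneity conventions straight—remembering that the standard-dilation condition carries degree $\lambda^{p+1}$ rather than $\lambda^p$—and confirming that the piecewise definition causes no discontinuity issue at the coordinate hyperplanes, both of which are handled by the two cases above.
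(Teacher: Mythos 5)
Your proof is correct and follows essentially the same route as the paper's: substitute $\lambda z$ into the definition \eqref{nf}, observe that the inner argument becomes $\delta_{\lambda}^{r}(x)$, apply the homogeneity hypothesis to get the factor $\lambda^{p+r_i}$ in the numerator, and cancel against $\lambda^{r_i-1}$ in the denominator to obtain $\lambda^{p+1}\overline{f}_i(z)$. Your explicit treatment of the $z_i=0$ case and of the convention that standard-dilation homogeneity of degree $p$ carries the factor $\lambda^{p+1}$ is slightly more careful than the paper's one-line remark, but the argument is the same.
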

\begin{proof}
From the equation of (\ref{wrt}), one can get that:
\begin{align}
\overline{f}_i(\lambda z)&=\frac{f_i(\lambda^{r_1}z_1^{r_1},\cdots,\lambda^{r_n}z_n^{r_n})}
{\lambda^{r_i-1}z_i^{r_i-1}}\nonumber\\
&=\frac{f_i(\lambda^{r_1}x_1,\cdots,\lambda^{r_n}x_n)}
{\lambda^{r_i-1}z_i^{r_i-1}}\nonumber\\
&=\frac{f_i(\delta_{\lambda}^{r}(x))}{\lambda^{r_i-1}z_i^{r_i-1}}=\frac{\lambda^p \delta_{\lambda}^{r}f_i(x))}{\lambda^{r_i-1}z_i^{r_i-1}}\nonumber\\
&=\frac{\lambda^p \lambda^{r_i}f_i(x)}{\lambda^{r_i-1}z_i^{r_i-1}}=\lambda^p(\lambda\overline{f}_i( z))
\end{align}
where $\overline{f}(0)=0$. The proof is completed.
\end{proof}

\begin{lemma}\label{l2}
For the new function $\overline{f}(z)$ in (\ref{nf}), suppose the function $f$ is cooperative, then for any two vectors ${z}$ and ${w}$ in $\mathbb{R}^n_{+}$ with $z_i=w_i$ and ${z}\ge {w}$, we have $\overline{f}_i(z)\ge \overline{f}_i(w)$.
\end{lemma}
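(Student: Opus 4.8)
The plan is to reduce the claim to the Proposition of \cite{S1995} stated above (for cooperative fields, $x_i=y_i$ and $x\ge y$ imply $f_i(x)\ge f_i(y)$) by pulling the inequality back through the transform (\ref{tf}). The point is that (\ref{nf}) is nothing but $f_i$ evaluated at the preimage of $z$, divided by a factor that depends only on $z_i$; so once I produce the inequality at the level of $f$, dividing by a common positive number finishes the job.

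First I would introduce the preimages $x=(z_1^{r_1},\dots,z_n^{r_n})$ and $y=(w_1^{r_1},\dots,w_n^{r_n})$, both of which lie in $\mathbb{R}^n_{+}$. Since every exponent satisfies $r_j>0$, the scalar map $t\mapsto t^{r_j}$ is strictly increasing on $\mathbb{R}_{+}$, so the componentwise order is preserved: the hypothesis $z\ge w$ yields $x\ge y$, and the hypothesis $z_i=w_i$ yields $x_i=y_i$. Applying the Proposition of \cite{S1995} to the cooperative field $f$ at the pair $(x,y)$ then gives $f_i(x)\ge f_i(y)$, i.e. $f_i(z_1^{r_1},\dots,z_n^{r_n})\ge f_i(w_1^{r_1},\dots,w_n^{r_n})$.

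Second, I would divide by the denominator in (\ref{nf}), splitting into the two branches of that definition. If $z_i=w_i>0$, then $z_i^{r_i-1}=w_i^{r_i-1}>0$ is a genuinely \emph{common} and \emph{positive} factor (a positive power of a positive number), so dividing both sides of the previous inequality by it preserves the inequality and produces exactly $\overline{f}_i(z)\ge\overline{f}_i(w)$. If $z_i=w_i=0$, then the second branch of (\ref{nf}) sets both $\overline{f}_i(z)$ and $\overline{f}_i(w)$ equal to $0$, so the claim holds with equality.

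The only delicate point, which is where the hypothesis $z_i=w_i$ (as opposed to merely $z\ge w$) is indispensable, is that the two sides of (\ref{nf}) must share the same denominator and that this denominator be positive. Since $r_i-1$ may well be negative, an unequal $z_i$ and $w_i$ would spoil both the common-factor cancellation and the sign of the factor; the equality $z_i=w_i$ removes this ambiguity entirely. I expect no obstacle beyond this bookkeeping, as the mathematical content is carried solely by the monotonicity of $t\mapsto t^{r_j}$ together with the cited Proposition.
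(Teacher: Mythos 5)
Your proposal is correct and follows essentially the same route as the paper's own proof: pass to the preimages $x=(z_1^{r_1},\dots,z_n^{r_n})$, $y=(w_1^{r_1},\dots,w_n^{r_n})$, invoke the cited Proposition to get $f_i(x)\ge f_i(y)$, and divide by the common positive denominator $z_i^{r_i-1}=w_i^{r_i-1}$, with the zero case handled by the second branch of (\ref{nf}). Your treatment is in fact slightly more careful than the paper's on the monotonicity of $t\mapsto t^{r_j}$ and on the degenerate case $z_i=w_i=0$.
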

\begin{proof}
At first, from the definition of (\ref{nf}), if at least one of $z_i, w_i$ is zero, the above claim is proved; otherwise, we have
\begin{align}\label{rpc}
\overline{f}_i(z)=\frac{f_i(x)}{z_i^{r_i-1}},~~\overline{f}_i(w)=\frac{f_i(y)}{w_i^{r_i-1}},
\end{align}
where $z_i=x_i^{1/r_i}$ and $w_i=y_i^{1/r_i}$.

Therefore, if ${z}\ge {w}$ and $z_i=w_i$, then $x\ge y$ and $x_i=y_i$, according to Proposition 1, one can get that $f_i(x)\ge f_i(y)$ and $z_i^{r_i-1}=w_i^{r_i-1}$. Combined with the equation (\ref{rpc}), we have $\overline{f}_i(z)\ge \overline{f}_i(w)$.
\end{proof}

\begin{lemma}\label{l3}
For the new function $\overline{f}(z)$ in (\ref{nf}), suppose the function $f$ is nondecreasing and { $f_i(x)=\Omega(x_i)$, that is to say, there exists a function $d(x_1,\cdots,x_{i-1},x_{i+1},\cdots,x_n)>0$, such that $f_i(x)\ge d x_i$.} Then for any two vectors ${z}$ and ${w}$ in $\mathbb{R}^n_{+}$ with ${z}\ge {w}$, we have $\overline{f}_i(z)\ge \overline{f}_i(w)$.
\end{lemma}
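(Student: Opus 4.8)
The plan is to reduce the claim to a coordinate-by-coordinate comparison for $\overline{f}_i$ and to isolate the one direction in which the transform can fight against monotonicity. Given $z\ge w$ in $\mathbb{R}^n_{+}$, I would introduce the intermediate vector $v$ defined by $v_i=w_i$ and $v_j=z_j$ for $j\ne i$, so that $z\ge v\ge w$, and then establish $\overline{f}_i(z)\ge\overline{f}_i(v)$ and $\overline{f}_i(v)\ge\overline{f}_i(w)$ separately. The point of this splitting is that $v$ and $w$ differ only in the coordinates $j\ne i$, whereas $z$ and $v$ differ only in the $i$-th coordinate; since the denominator $z_i^{r_i-1}$ in (\ref{nf}) depends solely on $z_i$, the two comparisons exercise very different mechanisms.

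First I would dispose of the degenerate cases. If $w_i=0$ then $\overline{f}_i(w)=0$ by definition, while the hypothesis $f_i(x)\ge d\,x_i\ge0$ forces $\overline{f}_i(z)\ge0$ (and $\overline{f}_i(z)=0$ when $z_i=0$ as well), so the claim holds; hence I may assume $z_i\ge w_i>0$ and work with the explicit quotients as in (\ref{rpc}). For the comparison $\overline{f}_i(v)\ge\overline{f}_i(w)$, the two vectors share the same $i$-th coordinate, so the denominators $v_i^{r_i-1}$ and $w_i^{r_i-1}$ coincide; since $v\ge w$ gives $v_j^{r_j}\ge w_j^{r_j}$ for every $j$, the nondecreasing property of $f$ yields $f_i(v_1^{r_1},\dots,v_n^{r_n})\ge f_i(w_1^{r_1},\dots,w_n^{r_n})$, and dividing by the common positive denominator finishes this half. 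This step mirrors the argument behind Lemma \ref{l2}, now carried by the nondecreasing hypothesis rather than by cooperativity.

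The heart of the matter is the remaining inequality $\overline{f}_i(z)\ge\overline{f}_i(v)$, where $z$ and $v$ agree in all coordinates except the $i$-th, with $z_i\ge v_i>0$. Writing $x_i=z_i^{r_i}$ and holding the other arguments of $f_i$ fixed at their common values, the quotient is $\overline{f}_i=f_i(x)/z_i^{\,r_i-1}$, and increasing $z_i$ pushes the numerator up (nondecreasing) but, when $r_i>1$, also inflates the denominator $z_i^{\,r_i-1}$. This competition is exactly where the extra hypothesis $f_i(x)=\Omega(x_i)$ must enter: from $f_i(x)\ge d(x_{1},\dots,x_{i-1},x_{i+1},\dots,x_n)\,x_i$ I obtain
\begin{align*}
\overline{f}_i(z)=\frac{f_i(x)}{z_i^{\,r_i-1}}\ge\frac{d\,z_i^{\,r_i}}{z_i^{\,r_i-1}}=d\,z_i,
\end{align*}
so that along the $i$-th coordinate $\overline{f}_i$ grows at least linearly; the plan is to play this linear lower bound against the denominator in order to obtain $\overline{f}_i(z)\ge\overline{f}_i(v)$.

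I expect this last step to be the main obstacle, and the one that genuinely needs the $\Omega$ condition. For $r_i\le1$ the exponent $r_i-1\le0$ makes $z_i\mapsto z_i^{-(r_i-1)}$ nondecreasing, so monotonicity of $f$ already settles everything; but for $r_i>1$ the linear lower bound is doing real work against the growing denominator, and I would have to verify carefully that it suffices, i.e.\ that the map $z_i\mapsto f_i(x)/z_i^{\,r_i-1}$ is indeed nondecreasing. This is the delicate point where the interplay between the lower bound $f_i\ge d\,x_i$ and the degree of the dilation has to be pinned down, and I would treat it as the crux of the argument.
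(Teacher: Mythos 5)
Your reduction to single-coordinate comparisons is sound, and the easy half $\overline{f}_i(v)\ge\overline{f}_i(w)$, where $v$ and $w$ share the $i$-th coordinate and the common denominator cancels, is handled correctly. But the proof is not complete: the comparison $\overline{f}_i(z)\ge\overline{f}_i(v)$ along the $i$-th coordinate --- which you yourself identify as the crux --- is left as a plan rather than an argument, and the tool you propose for it does not suffice. The pointwise bound $\overline{f}_i(z)\ge d\,z_i$ controls $\overline{f}_i$ from below at the larger argument but gives no upper control on $\overline{f}_i(v)$ at the smaller one, so ``playing the linear lower bound against the denominator'' cannot by itself yield monotonicity of $s\mapsto f_i(\dots,s^{r_i},\dots)/s^{\,r_i-1}$. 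Indeed, ``$f_i$ nondecreasing together with $f_i(x)\ge d\,x_i$'' is consistent with that quotient being decreasing on an interval: take $f_i(x)=x_i+c$ with $c>0$ and $d=1$, for which the quotient is $s+c\,s^{1-r_i}$, strictly decreasing near $0$ when $r_i>1$. So some further, derivative-level consequence of the hypotheses has to be extracted; the pointwise inequality alone is a dead end.

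The paper closes exactly this step by a different mechanism: it differentiates $\overline{f}_i(z)=f_i(x)\,x_i^{(1-r_i)/r_i}$ in $x_i$ and reduces the claim to the differential inequality
\begin{align*}
x_i\,\frac{\partial f_i(x)}{\partial x_i}+\frac{1-r_i}{r_i}\,f_i(x)\ \ge\ 0,
\end{align*}
which it then obtains from the $\Omega$ hypothesis by contradiction: if this failed, integrating the reverse relation would give $f_i(x)< d\cdot x_i^{(r_i-1)/r_i}$, which is then played off against $f_i(x)\ge d\,x_i$. In other words, the operative content of $f_i=\Omega(x_i)$ in the paper is a bound on the logarithmic derivative of $f_i$ in $x_i$, not the raw lower bound $\overline{f}_i(z)\ge d\,z_i$ that you derive. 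To complete your route you would need to supply an argument of this differential type for the $i$-th coordinate step (and, as your own discussion of the case $r_i>1$ anticipates, to check it on the region where $x_i$ is small, which is precisely where the competition between numerator and denominator is lost by the pointwise bound). Without that, the central claim of the lemma remains unproved.
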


\begin{proof}
In order to prove the conclusion, one can only need to check that $\partial{\overline{f}_i(z)}/\partial{x_j}$ is nonnegative.

At first, from the condition that $f$ is nondecreasing, one can get that
\begin{align*}
\frac{\partial{f_i(x)}}{\partial{x_j}}\ge 0,~~i,j=1,\cdots,n.
\end{align*}
From Definition (\ref{nf}), we have $\overline{f}_i(z)={f}_i(x)x_i^{(1-r_i)/r_i}$, therefore, if $i\ne j$
\begin{align*}
\frac{\partial{\overline{f}_i(z)}}{\partial{x_j}}
=\frac{\partial{f}_i(x)}{\partial{x_j}}x_i^{(1-r_i)/r_i}\ge 0;
\end{align*}
if $i=j\in\{1,\cdots,n\}$, we have
\begin{align*}
\frac{\partial{\overline{f}_i(z)}}{\partial{x_j}}
=&\frac{\partial{f}_i(x)}{\partial{x_j}}x_i^{(1-r_i)/r_i}+
\frac{1-r_i}{r_i}f_i(x)x_i^{1/r_i-2}\\
=&x_i^{1/r_i-2}\bigg[\frac{\partial{f}_i(x)}{\partial{x_j}}x_i+\frac{1-r_i}{r_i}f_i(x)\bigg]
\end{align*}
Therefore, one just need to prove that $\frac{\partial{f}_i(x)}{\partial{x_j}}x_i+\frac{1-r_i}{r_i}f_i(x)\ge 0$. If not, simple calculations show that
\begin{align*}
f_i(x)< d(x_1,\cdots,x_{i-1},x_{i+1},\cdots,x_n)\cdot x_i^{(r-1)/r}< dx_i,
\end{align*}
which contradicts to the conditions $f_i(x)\ge dx_i$.

Based on all above discussions, one can get that the function $\overline{f}_i$ is nondecreasing. The proof is completed.
\end{proof}

\section{Main Results}\label{main}
At first, we give the network model for continuous-time positive nonlinear systems with unbounded time-varying delay as:
\begin{align}\label{model}
\left\{
\begin{array}{rll}
\dot{x}(t)&=f(x(t))+g(x(t-\tau(t)),&t\ge 0\\
x(t)&=\varphi(t), &t\in (-\infty, 0]
\end{array} \right.
\end{align}
where $x_i, i=1,\cdots,n$ is the state variable, $\varphi(t)\in \mathcal{C}((-\infty,0],\mathbb{R}^n_{+})$ is the vector-valued function specifying the initial condition of the system. $\tau(t)$ is the time-varying delay with $\tau(t)<t$ but unbounded.

\begin{assumption}\label{a1}
Assume the following conditions hold:
\begin{enumerate}
  \item $f$ is cooperative and $g$ is nondecreasing on $\mathbb{R}^n_{+}$,
  \item $f$ and $g$ are homogeneous of degree $p$ w.r.t. the dilation map $\delta_{\lambda}^{r}(x)$.
\end{enumerate}

It isclear that these conditions are the generalization of linear positive systems, and $f(0)=g(0)=0$, so $0$ is a solution of system (\ref{model}).
\end{assumption}

Using the above transform, let $z_{i}^{r_{i}}(t)=x_{i}(t),$ then all
\begin{align}\label{new}
\dot{z}_{i}(t)=&\frac{1}{r_i}\cdot\frac{1}{{z}_{i}^{r_{i}-1}(t)}
[{f}_{i}({z}_{1}^{r_{1}}(t),\cdots,{z}_{n}^{r_{n}}(t))\nonumber
\\
&+{g}_i({z}_{1}^{r_{1}}(t-\tau(t)),\cdots,{z}_{n}^{r_{n}}(t-\tau(t)))]
\nonumber\\
=&\frac{1}{r_i}\cdot[\overline{f}_i(z(t))+\overline{g}_i(z(t-\tau(t)))]
\end{align}
where $\overline{f}_i(z(t))={f_i(x(t))}/{z_i^{r_i-1}(t)}$ and $\overline{g}_i(z(t-\tau(t)))={g_i(x(t-\tau(t)))}/{z_i^{r_i-1}(t-\tau(t))}$. From Lemma \ref{l1} to Lemma \ref{l3}, we have the following property of $\overline{f}_i$ and $\overline{g}_i$:
\begin{enumerate}
  \item $\overline{f}$ is cooperative and $\overline{g}$ is nondecreasing on $\mathbb{R}^n_{+}$,
  \item $\overline{f}$ and $\overline{g}$ are homogeneous of degree $p$ w.r.t. the standard dilation map.
\end{enumerate}

In order to describe the convergence of the zero solution, we first define a norm as:
\begin{align}\label{cr0}
\|z(t)\|_{\{\xi,+\infty\}}=\max_{i=1,\cdots,n}\xi_i^{-1}z_i(t)
\end{align}
Obviously, if $\|z(t)\|_{\{\xi,+\infty\}}\rightarrow 0$, then $z(t)\rightarrow 0$ and $x(t)\rightarrow 0$, as $t\rightarrow 0$.

\begin{remark}
In \cite{FCJ2014b}, a similar definition of weighted $l_{\infty}$ norm is defined by a vector $v>0$ as
\begin{align*}
\|x\|_{\infty}^{v}=\max_{1\le i\le n}\frac{|x_i|}{v_i}
\end{align*}
It is clear that  this norm has been introduced and used in discussing stability of neural networks successfully in \cite{CW2007b}, or even much earlier work \cite{Chen2001} and others.
\end{remark}

\begin{definition}
Suppose that $\mu(t):\mathbb{R}_{+}\rightarrow \mathbb{R}_{+}$ is nondecreasing function satisfying $\mu(t)\rightarrow +\infty$ as $t\rightarrow +\infty$. The positive nonlinear system (\ref{model}) is said to achieve the global $\mu$-stability, if there exist constant scalars $M>0$ and $T>0$ (which may be related to initial values), such that for all $t\ge T$
\begin{align}
\|z(t)\|_{\{\xi,+\infty\}}\le \frac{M}{\mu(t)},~~i=1,\cdots,n.
\end{align}
\end{definition}

\begin{remark}
The concept of $\mu$-stability was first proposed
in \cite{CW2007b} and for power-rate convergence in  \cite{CW2007a} to discuss stability of dynamical systems with unbounded delays. It pointed out that between asymptotical stability and exponential stability, there exist various convergence concepts. It also shows that in case of bounded delay, the convergence rate is exponentially. And as the delay is increasing, the convergence rate is decreasing from exponential to asymptotical.

\end{remark}

\begin{theorem}\label{th1}
Suppose Assumption \ref{a1} holds. If there exists a positive and nondecreasing function $\mu(t)$ satisfying $\lim\limits_{t\rightarrow +\infty}\mu(t)=+\infty$, and for $j=1,\cdots,n$
\begin{align}\label{cri}
\frac{1}{r_j}
\bigg[\frac{\overline{f}_j(\xi)}{\xi_j}+\bigg(\lim\limits_{t\rightarrow \infty}\frac{\mu(t)}{\mu(t-\tau(t))}\bigg)^{p+1}\frac{\overline{g}_j(\xi)}{\xi_j}\bigg]\nonumber\\
+\lim\limits_{t\rightarrow \infty}\frac{\dot{\mu}(t)}{\mu(t)^{1-p}}<0.
\end{align}
Then the zero solution of positive nonlinear system (\ref{new}) is globally $\mu$-stable, i.e., $z_j(t)=O(\mu^{-1}(t))$.
\end{theorem}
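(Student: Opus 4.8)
The plan is to carry out the whole argument on the transformed system (\ref{new}), exploiting the three inherited properties listed after (\ref{new}): $\overline{f}$ is cooperative, $\overline{g}$ is nondecreasing, and both are homogeneous of degree $p$ with respect to the \emph{standard} dilation map, so that $\overline{f}(\lambda z)=\lambda^{p+1}\overline{f}(z)$ and $\overline{g}(\lambda z)=\lambda^{p+1}\overline{g}(z)$ for every $\lambda>0$. The target $z_j(t)=O(\mu^{-1}(t))$ is equivalent to producing constants $M\ge 1$ and $T>0$ for which $z_i(t)\le \xi_i M/\mu(t)$ for all $i$ and all $t$ up to time $t$, i.e. $\mu(t)\|z(t)\|_{\{\xi,+\infty\}}\le M$. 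First I would fix $M$ large enough that this bound holds on the initial interval (possible since $\mu(t)\to+\infty$, $z$ is continuous, and $M,T$ may depend on $\varphi$; note also $t-\tau(t)>0$ so the delayed argument never re-enters negative time in a way that escapes this choice). Then I argue by contradiction: if the bound ever failed, continuity would give a \emph{first} time $t^{\ast}\ge T$ and an index $j$ with $z_j(t^{\ast})=\xi_j M/\mu(t^{\ast})$ while $z_i(s)\le \xi_i M/\mu(s)$ for all $i$ and all $s\le t^{\ast}$.

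At this crossing time the structural hypotheses do the work. Setting $\bar z:=(M/\mu(t^{\ast}))\,\xi$ we have $\bar z\ge z(t^{\ast})$ componentwise with $\bar z_j=z_j(t^{\ast})$, so cooperativity of $\overline{f}$ (Lemma \ref{l2}) gives $\overline{f}_j(z(t^{\ast}))\le \overline{f}_j(\bar z)$, and homogeneity (Lemma \ref{l1}) turns this into $\overline{f}_j(z(t^{\ast}))\le (M/\mu(t^{\ast}))^{p+1}\overline{f}_j(\xi)$. Since the bound holds at the earlier instant $t^{\ast}-\tau(t^{\ast})$, monotonicity of $\overline{g}$ (Lemma \ref{l3}) and homogeneity give $\overline{g}_j(z(t^{\ast}-\tau(t^{\ast})))\le (M/\mu(t^{\ast}-\tau(t^{\ast})))^{p+1}\overline{g}_j(\xi)$. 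Feeding both into (\ref{new}) bounds $\dot z_j(t^{\ast})$ from above, while minimality of $t^{\ast}$ forces the one-sided derivative condition $\dot z_j(t^{\ast})\ge \frac{d}{dt}\big[\xi_j M/\mu(t)\big]_{t=t^{\ast}}=-\xi_j M\dot{\mu}(t^{\ast})/\mu(t^{\ast})^2$ (via the upper Dini derivative of the weighted maximum (\ref{cr0})). Cancelling the common positive factor $\xi_j M^{p+1}\mu(t^{\ast})^{-(p+1)}$ collapses the resulting chain to
\[
\frac{1}{r_j}\left[\frac{\overline{f}_j(\xi)}{\xi_j}+\left(\frac{\mu(t^{\ast})}{\mu(t^{\ast}-\tau(t^{\ast}))}\right)^{p+1}\frac{\overline{g}_j(\xi)}{\xi_j}\right]+\frac{1}{M^{p}}\,\frac{\dot{\mu}(t^{\ast})}{\mu(t^{\ast})^{1-p}}\ \ge\ 0 .
\]

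The crux, and the step I expect to be the main obstacle, is reconciling this finite-time inequality with criterion (\ref{cri}), which is phrased through limits as $t\to+\infty$. Let $B(t)$ denote the left-hand side of the displayed inequality but with the coefficient $1/M^{p}$ replaced by $1$; since the ratio and the term $\dot{\mu}/\mu^{1-p}$ converge, $B(t)$ tends to the left-hand side of (\ref{cri}), which is \emph{strictly} negative, so I can enlarge $T$ to guarantee $B(t^{\ast})<0$. It then remains to compare the displayed quantity $A$ with $B(t^{\ast})$. The $\overline{f}$ and $\overline{g}$ contributions are identical in both (same finite ratio $\mu(t^{\ast})/\mu(t^{\ast}-\tau(t^{\ast}))$), so $A-B(t^{\ast})=(M^{-p}-1)\,\dot{\mu}(t^{\ast})/\mu(t^{\ast})^{1-p}$; because $M\ge 1$ and $p\ge 0$ give $M^{-p}\le 1$, while $\dot{\mu}\ge 0$ and $\mu>0$ give $\dot{\mu}(t^{\ast})/\mu(t^{\ast})^{1-p}\ge 0$, we obtain $A\le B(t^{\ast})<0$, contradicting $A\ge 0$. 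Hence no crossing occurs, the bound $z_i(t)\le \xi_i M/\mu(t)$ persists for all $t\ge T$, and $z_j(t)=O(\mu^{-1}(t))$ follows. The delicate points are thus the correct use of Lemmas \ref{l1}--\ref{l3} at the crossing time to produce the homogeneous upper bounds, and the elementary but essential sign bookkeeping ($M\ge 1$, $p\ge 0$, $\dot\mu\ge 0$) that lets the limit-based criterion (\ref{cri}) dominate its finite-time counterpart.
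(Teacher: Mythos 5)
Your proposal is correct and follows essentially the same route as the paper: the paper packages your ``first crossing time'' contradiction as the statement that $\mathcal{V}(t)=\max\{1,\sup_{s\le t}\mu(s)\|z(s)\|_{\{\xi,+\infty\}}\}$ stops increasing after $T$, but the core steps — working on the transformed system, applying Lemmas \ref{l1}--\ref{l3} at the extremal index to extract the $(\cdot)^{p+1}$ factors, enlarging $T$ to turn the limit condition (\ref{cri}) into a finite-time one, and the $M\ge 1$ (resp.\ $\chi\ge 1$) normalization that handles the $\mu(t)^{p}$ mismatch — are identical. No gaps.
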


\begin{proof}
From the inequality (\ref{cri}), we can find a constant $T$ such that for all $t\ge T$,
\begin{align*}
\frac{1}{r_j}
\bigg[\frac{\overline{f}_j(\xi)}{\xi_j}+\bigg(\frac{\mu(t_0)}{\mu(t_0-\tau(t_0))}\bigg)^{p+1}\frac{\overline{g}_j(\xi)}{\xi_j}\bigg]+\frac{\dot{\mu}(t_0)}{\mu(t_0)^{1-p}}<0.
\end{align*}
Moreover, for any $\chi\ge 1$, the following inequality holds:
\begin{align}\label{cr2}
\chi\frac{1}{r_j}
\bigg[\frac{\overline{f}_j(\xi)}{\xi_j}+\bigg(\frac{\mu(t_0)}{\mu(t_0-\tau(t_0))}\bigg)^{p+1}\frac{\overline{g}_j(\xi)}{\xi_j}\bigg]+\frac{\dot{\mu}(t_0)}{\mu(t_0)^{1-p}}<0.
\end{align}

Define the Lyapunov function as:
\begin{align}
\mathcal{V}(t)=\max\{1,\sup_{s\le t}V(t)\},
\end{align}
where $V(t)=\mu(t)\cdot\|z(t)\|_{\{\xi,+\infty\}}$. Then $\mathcal{V}(t)$ is a nondecreasing function, and $V(t)\le \mathcal{V}(t)$. Now, we claim that $\mathcal{V}(t)$ is bounded. More precisely, we will prove that for all $t\ge T$, $\mathcal{V}(t)=\mathcal{V}(T)$.

In fact, for any time $t_0>T$, in case $V(t_0)<\mathcal{V}(t_0)$, then there exists a $\epsilon$-neighborhood of $t_0$, such that $V(t)<\mathcal{V}(t_0)$ for $t\in (t_0,t_0+\epsilon)$, i.e., $\mathcal{V}(t)$ is non-increasing at $t_0$.

Instead, if $V(t_0)=\mathcal{V}(t_0)$, there exists an index $j$ such that
\begin{align*}
\mu(t_0)\xi_j^{-1}z_j(t_0)=\mathcal{V}(t_0);
\mu(t_0)\xi_i^{-1}z_i(t_0)\le\mathcal{V}(t_0), i\ne j,
\end{align*}
which mean that
\begin{align*}
z_j(t_0)=\frac{V(t_0)}{\mu(t_0)}\xi_j; ~~\mathrm{and}~~ z_i(t_0)=\frac{V(t_0)}{\mu(t_0)}\xi_i, i\ne j.
\end{align*}

From the Assumption \ref{a1}, using the cooperative and homogeneous property of function $\overline{f}$, one can get that
\begin{align}\label{cr3}
\overline{f}_j(z(t_0))\le \overline{f}\bigg(\frac{V(t_0)}{\mu(t_0)}\xi_j\bigg)=\bigg(\frac{V(t_0)}{\mu(t_0)}\bigg)^{p+1}\overline{f}_j(\xi).
\end{align}

Moreover, since $V(t_0-\tau(t_0))\le \mathcal{V}(t_0)=V(t_0)$, therefore, $\mu(t_0-\tau(t_0))\xi_j^{-1}z_j(t_0-\tau(t_0))\le V(t_0)$, i.e.,
\begin{align*}
z_j(t_0-\tau(t_0))\le \frac{V(t_0)}{\mu(t_0-\tau(t_0))}\xi_j.
\end{align*}
From the Assumption \ref{a1}, using the nondecreasing and homogeneous property of function $\overline{g}$, one can get that
\begin{align}\label{cr4}
&\overline{g}_j(z(t_0-\tau(t_0)))\nonumber\\
\le& \overline{g}_j\bigg(\frac{V(t_0)}{\mu(t_0-\tau(t_0))}\xi_j\bigg)\le\bigg(\frac{V(t_0)}{\mu(t_0-\tau(t_0))}\bigg)^{p+1}\overline{g}_j(\xi).
\end{align}

Now, differentiating $V(t_0)$ by the upper-right Dini-derivative, and using the obtained results (\ref{cr2}), (\ref{cr3}) and (\ref{cr4}), we have
\begin{align*}
&D^{+}V(t)|_{t=t_0}=\lim\limits_{\Delta \rightarrow 0^{+}}\sup\frac{V(t_0+\Delta)-V(t_0)}{\Delta}|_{t=t_0}\nonumber\\
=&D^{+}[\mu(t)\max_{i=1,\cdots,n}\xi_i^{-1}z_i(t)]|_{t=t_0}\nonumber\\
=&D^{+}[\mu(t)\xi_j^{-1}z_j(t)]|_{t=t_0}\nonumber\\
=&\dot{\mu}(t_0)\xi_j^{-1}z_j(t_0)+\mu(t_0)\xi_j^{-1}\dot{z}_j(t_0)\nonumber\\
=&\dot{\mu}(t_0)\xi_j^{-1}z_j(t_0)+\mu(t_0)\frac{1}{\xi_jr_j}[\overline{f}_j(z(t_0))+\overline{g}_j(z(t_0-\tau(t_0)))]\nonumber\\
=&\frac{\dot{\mu}(t_0)}{\mu(t_0)}\mu(t_0)\xi_j^{-1}z_j(t_0)+\mu(t_0)\frac{1}{\xi_jr_j}
\bigg[\bigg(\frac{V(t_0)}{\mu(t_0)}\bigg)^{p+1}\overline{f}_j(\xi)\nonumber\\
&+\bigg(\frac{V(t_0)}{\mu(t_0-\tau(t_0))}\bigg)^{p+1}\overline{g}_j(\xi)\bigg]\nonumber\\
=&\frac{V(t_0)}{\mu(t_0)^p}\Bigg[\frac{V(t_0)^p}{r_j}
\bigg[\frac{\overline{f}_j(\xi)}{\xi_j}+\bigg(\frac{\mu(t_0)}{\mu(t_0-\tau(t_0))}\bigg)^{p+1}\frac{\overline{g}_j(\xi)}{\xi_j}\bigg]\nonumber\\
&+\frac{\dot{\mu}(t_0)}{\mu(t_0)^{1-p}}\Bigg]<0,
\end{align*}
therefore, $\mathcal{V}(t_0)$ is also non-increasing at $t_0$.

In summary, $\mathcal{V}(t)=\mathcal{V}(T)$ for all $t\ge T$, which implies that $\mu(t)\|z(t)\|_{\{\xi,+\infty\}}\le \mathcal{V}(T)$, i.e., the global $\mu$-stability can be realized.
The proof is completed.
\end{proof}

If we define the norm for $z(t)$ is as follows:
\begin{align}\label{cr10}
\|z(t)\|_{\{\xi,+\infty, r^{\star}\}}=\max_{i=1,\cdots,n}(\xi_i^{-1}z_i(t))^{r^{\star}},
\end{align}
then the corresponding results can be stated as:
\begin{corollary}
Suppose Assumption \ref{a1} holds, if there exists a positive and nondecreasing function $\mu(t)$ satisfying $\lim\limits_{t\rightarrow +\infty}\mu(t)=+\infty$, and for $j=1,\cdots,n$
\begin{align}\label{crp}
\frac{r^{\star}}{r_j}
\bigg[\frac{\overline{f}_j(\xi)}{\xi_j}+\bigg(\lim\limits_{t\rightarrow \infty}\frac{\mu(t)}{\mu(t-\tau(t))}\bigg)^{\frac{p+1}{r^{\star}}}\frac{\overline{g}_j(\xi)}{\xi_j}\bigg]\nonumber\\
+\lim\limits_{t\rightarrow \infty}\frac{\dot{\mu}(t)}{\mu(t)^{1-\frac{p}{r^{\star}}}}<0.
\end{align}
Then the zero solution of positive nonlinear system (\ref{new}) is globally $\mu$-stable, i.e., $z_j(t)=O(\mu^{-1/r^{\star}}(t))$.
\end{corollary}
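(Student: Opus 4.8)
The plan is to follow the proof of Theorem~\ref{th1} step for step, replacing the norm $\|z(t)\|_{\{\xi,+\infty\}}$ by the weighted power norm $\|z(t)\|_{\{\xi,+\infty,r^\star\}}$ of (\ref{cr10}); the only genuinely new ingredient is the chain-rule factor created by the exponent $r^\star$. First I would use the strictness of the limit inequality (\ref{crp}) to pick a constant $T$ so that for every $t_0\ge T$ the inequality still holds with the limits replaced by the instantaneous ratios $\mu(t_0)/\mu(t_0-\tau(t_0))$ and $\dot\mu(t_0)/\mu(t_0)^{1-p/r^\star}$. Because $\mu$ is nondecreasing, $\dot\mu\ge 0$, which forces the bracketed factor $\frac{r^\star}{r_j}[\,\cdot\,]$ to be non-positive; consequently, exactly as in the passage from the first displayed inequality to (\ref{cr2}), the strengthened inequality obtained by inserting any multiplier $\chi\ge 1$ in front of $\frac{r^\star}{r_j}[\,\cdot\,]$ remains strictly negative for all $t_0\ge T$.

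Next I would introduce $V(t)=\mu(t)\,\|z(t)\|_{\{\xi,+\infty,r^\star\}}$ and $\mathcal V(t)=\max\{1,\sup_{s\le t}V(s)\}$, and claim $\mathcal V(t)=\mathcal V(T)$ for all $t\ge T$ by proving that $\mathcal V$ is non-increasing at each $t_0>T$. The case $V(t_0)<\mathcal V(t_0)$ is handled by continuity as in Theorem~\ref{th1}. In the case $V(t_0)=\mathcal V(t_0)$, let $j$ attain the maximum, so $(\xi_j^{-1}z_j(t_0))^{r^\star}=V(t_0)/\mu(t_0)$, hence $z_j(t_0)=\xi_j(V(t_0)/\mu(t_0))^{1/r^\star}$ while $z_i(t_0)\le \xi_i(V(t_0)/\mu(t_0))^{1/r^\star}$ for $i\ne j$; moreover $V(t_0-\tau(t_0))\le\mathcal V(t_0)=V(t_0)$ gives $z_i(t_0-\tau(t_0))\le \xi_i(V(t_0)/\mu(t_0-\tau(t_0)))^{1/r^\star}$ for all $i$. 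Taking $\lambda=(V(t_0)/\mu(t_0))^{1/r^\star}$ and invoking cooperativity of $\overline f$ (Lemma~\ref{l2}), the nondecreasing property of $\overline g$ (Lemma~\ref{l3}) and homogeneity w.r.t. the standard dilation (Lemma~\ref{l1}, giving $\overline f_j(\lambda\xi)=\lambda^{p+1}\overline f_j(\xi)$), I would obtain $\overline f_j(z(t_0))\le (V(t_0)/\mu(t_0))^{(p+1)/r^\star}\overline f_j(\xi)$ and $\overline g_j(z(t_0-\tau(t_0)))\le (V(t_0)/\mu(t_0-\tau(t_0)))^{(p+1)/r^\star}\overline g_j(\xi)$, in direct analogy with (\ref{cr3})--(\ref{cr4}).

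The core of the argument is the Dini derivative at $t_0$. Since the $j$-th component is active,
\begin{align*}
D^+V(t_0)=&\ \dot\mu(t_0)(\xi_j^{-1}z_j(t_0))^{r^\star}\\
&+\mu(t_0)\,r^\star(\xi_j^{-1}z_j(t_0))^{r^\star-1}\xi_j^{-1}\dot z_j(t_0),
\end{align*}
and I would substitute $\dot z_j(t_0)=\frac1{r_j}[\overline f_j(z(t_0))+\overline g_j(z(t_0-\tau(t_0)))]$ from (\ref{new}) together with the two homogeneity bounds. The delicate step is the bookkeeping of exponents: the chain-rule factor $(\xi_j^{-1}z_j(t_0))^{r^\star-1}=(V(t_0)/\mu(t_0))^{(r^\star-1)/r^\star}$ combines with the exponents $(p+1)/r^\star$ so that both nonlinear terms share the common positive prefactor $V(t_0)\,\mu(t_0)^{-p/r^\star}$, the delay term carries exactly $(\mu(t_0)/\mu(t_0-\tau(t_0)))^{(p+1)/r^\star}$, and the first term collapses to $\dot\mu(t_0)/\mu(t_0)^{1-p/r^\star}$. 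Factoring out $V(t_0)\mu(t_0)^{-p/r^\star}>0$, the surviving bracket is precisely the strengthened inequality of the first paragraph with $\chi=V(t_0)^{p/r^\star}$; since $V(t_0)=\mathcal V(t_0)\ge 1$ and $p/r^\star\ge0$ force $\chi\ge1$, the bracket is negative and $D^+V(t_0)<0$.

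Hence $\mathcal V(t)=\mathcal V(T)$ on $[T,\infty)$, so $\mu(t)(\xi_i^{-1}z_i(t))^{r^\star}\le\mathcal V(T)$ and therefore $z_i(t)\le \xi_i\,\mathcal V(T)^{1/r^\star}\mu(t)^{-1/r^\star}$ for all $t\ge T$, i.e. $z_j(t)=O(\mu^{-1/r^\star}(t))$, the claimed global $\mu$-stability. The only real obstacle I anticipate is the exponent bookkeeping in the Dini-derivative step; the remainder is a faithful transcription of the $r^\star=1$ argument of Theorem~\ref{th1}.
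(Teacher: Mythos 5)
Your proposal is correct and is exactly the adaptation of the proof of Theorem~\ref{th1} that the paper intends when it says the proof is ``similar'' and omits it: the exponent bookkeeping in the Dini-derivative step does work out, with the chain-rule factor $(\xi_j^{-1}z_j(t_0))^{r^\star-1}$ combining with the homogeneity exponents $(p+1)/r^\star$ to produce the common prefactor $V(t_0)\mu(t_0)^{-p/r^\star}$ and the multiplier $\chi=V(t_0)^{p/r^\star}\ge 1$. No gaps beyond those already present in the paper's own Theorem~\ref{th1} argument.
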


The proof is similar to that in Theorem \ref{th1}. Here we omit the proof.

\begin{remark}
By adjusting the parameter $r^{\star}$, the condition (\ref{crp}) can be satisfied. In \cite{FCJ2014b}, $r^{\star}=\max_{i}r_i$; while in \cite{AL2002}, $r^{\star}\ge\max_{i}r_i$.
\end{remark}

In case $p=0$, the inequality (\ref{crp}) can be rewritten as:
\begin{align}\label{crq}
\frac{r^{\star}}{r_j}
\bigg[\frac{\overline{f}_j(\xi)}{\xi_j}+\bigg(\lim\limits_{t\rightarrow \infty}\frac{\mu(t)}{\mu(t-\tau(t))}\bigg)^{\frac{1}{r^{\star}}}\frac{\overline{g}_j(\xi)}{\xi_j}\bigg]
+\lim\limits_{t\rightarrow \infty}\frac{\dot{\mu}(t)}{\mu(t)}<0
\end{align}

Except for the exponential stability and power-rate stability, we give the Log-stability and Log-Log stability (firstly given in \cite{CW2007b}) criteria under this case.

\begin{corollary} (Log-stability)
Suppose Assumption \ref{a1} holds with $p=0$, time delay $\tau(t)\le t-t/\ln t$, if
\begin{align}\label{log}
{\overline{f}_j(\xi)}+{\overline{g}_j(\xi)}
<0,
\end{align}
Then the zero solution of positive nonlinear system (\ref{new}) is globally $\mathrm{Log}$-stable, i.e., $z_j(t)=O(\ln(t+1)^{-1})$.
\end{corollary}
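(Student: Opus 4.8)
The plan is to apply Theorem~\ref{th1} in the degenerate case $p=0$ (equivalently, to verify the displayed condition (\ref{crq}) with the choice $r^{\star}=1$) for the explicit weight $\mu(t)=\ln(t+1)$, and to show that under the delay bound $\tau(t)\le t-t/\ln t$ this condition collapses precisely to the stated hypothesis (\ref{log}). Once this reduction is in place, Theorem~\ref{th1} immediately returns $z_j(t)=O(\mu^{-1}(t))=O(\ln(t+1)^{-1})$, which is exactly global $\mathrm{Log}$-stability.

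First I would record that $\mu(t)=\ln(t+1)$ is admissible as a weight: it is nondecreasing, positive for $t>0$, and $\mu(t)\to+\infty$ as $t\to+\infty$. Next I would evaluate the two limits appearing in (\ref{crq}). The self-rate term is immediate: since $\dot\mu(t)=1/(t+1)$, we get $\dot\mu(t)/\mu(t)=1/[(t+1)\ln(t+1)]\to 0$, and with $p=0$ this is the entire final term of (\ref{crq}).

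The crux is the delay ratio $\lim_{t\to\infty}\mu(t)/\mu(t-\tau(t))$. Because $\mu$ is nondecreasing and $0\le\tau(t)<t$, we have $\mu(t-\tau(t))\le\mu(t)$, so the ratio is at least $1$. For the matching upper bound I would feed in the delay hypothesis: $\tau(t)\le t-t/\ln t$ gives $t-\tau(t)\ge t/\ln t$, and monotonicity of $\mu$ then yields $\mu(t-\tau(t))\ge\ln(t/\ln t+1)$. Hence the ratio is squeezed between $1$ and $\ln(t+1)/\ln(t/\ln t+1)$. Using $\ln(t/\ln t)=\ln t-\ln\ln t\sim\ln t$, the upper bound tends to $1$, so by the squeeze theorem $\lim_{t\to\infty}\mu(t)/\mu(t-\tau(t))=1$. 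This asymptotic estimate is the only genuinely non-routine step, and it is precisely where the particular form of the delay bound is consumed; I expect handling the $\ln(t/\ln t+1)$ term cleanly to be the main obstacle.

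Finally, substituting $p=0$, $r^{\star}=1$, the delay-ratio limit $1$, and the vanishing self-rate term into (\ref{crq}) reduces it to $\frac{1}{r_j\xi_j}\bigl[\overline{f}_j(\xi)+\overline{g}_j(\xi)\bigr]<0$; since $r_j>0$ and $\xi_j>0$, this is equivalent to the hypothesis $\overline{f}_j(\xi)+\overline{g}_j(\xi)<0$ of (\ref{log}). Thus for every index $j$ all hypotheses of Theorem~\ref{th1} are met, and the conclusion $z_j(t)=O(\ln(t+1)^{-1})$ follows. A pedantic remark to insert is that the positivity of $\mu$ fails at $t=0$, but this is irrelevant since $\mu$-stability is an asymptotic property and $T$ in Theorem~\ref{th1} may be taken large.
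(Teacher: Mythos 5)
Your proposal is correct and follows essentially the same route as the paper: choose $\mu(t)=\ln(t+1)$, verify that $\dot{\mu}(t)/\mu(t)\to 0$ and that the delay-ratio limit equals $1$ under $t-\tau(t)\ge t/\ln t$, and then observe that with $r^{\star}=1$ and $p=0$ the condition (\ref{crq}) reduces to (\ref{log}) since $r_j,\xi_j>0$. The only difference is that you spell out the squeeze argument for $\lim_{t\to\infty}\mu(t)/\mu(t-\tau(t))=1$, which the paper dismisses as a ``simple calculation.''
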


\begin{proof}
Choose the function $\mu(t)=\ln(t+1)$, and simple calculations show that
\begin{align*}
\lim\limits_{t\rightarrow \infty}\frac{\mu(t)}{\mu(t/\ln t)}=1, ~~\mathrm{and}~~ \lim\limits_{t\rightarrow \infty}\frac{\dot{\mu}(t)}{\mu(t)}=0.
\end{align*}
Let $r^{\star}=1$, so the above inequality (\ref{crq}) is equivalent to the inequality of (\ref{log}). The proof is completed.
\end{proof}

\begin{corollary} (Log-Log stability)
Suppose Assumption \ref{a1} holds with $p=0$£¬ time delay $\tau(t)\le t-t^{\alpha}, 0<\alpha<1$, if inequality (\ref{log}) holds,
then the zero solution of positive nonlinear system (\ref{new}) is $\mathrm{Log}-\mathrm{Log}$ stable, i.e., $z_j(t)=O(\ln\ln(t+3)^{-1})$.
\end{corollary}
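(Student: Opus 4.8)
The plan is to imitate the proof of the preceding Log-stability corollary, but with the weight $\mu(t)=\ln\ln(t+3)$ in place of $\ln(t+1)$, and again with $r^{\star}=1$. First I would confirm that this $\mu$ is an admissible weight: it is positive (the shift by $3$ forces $\ln(t+3)>1$, so the outer logarithm is strictly positive for every $t\ge 0$), it is nondecreasing, and $\mu(t)\to+\infty$ as $t\to+\infty$. Hence the hypotheses of the general $p=0$ criterion (\ref{crq}) are in force once its two limits are checked.

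The substance of the argument is then to evaluate those two limits for this $\mu$ under the delay bound $\tau(t)\le t-t^{\alpha}$. For the delay ratio, $\tau(t)\le t-t^{\alpha}$ yields $t-\tau(t)\ge t^{\alpha}$, so monotonicity of $\mu$ gives $1\le \mu(t)/\mu(t-\tau(t))\le \mu(t)/\mu(t^{\alpha})$; it therefore suffices to show the upper bound tends to $1$. Writing $\ln\ln(t^{\alpha})=\ln(\alpha\ln t)=\ln\alpha+\ln\ln t$, both numerator and denominator are asymptotically $\ln\ln t$, so $\mu(t)/\mu(t^{\alpha})\to 1$ and thus $\lim_{t\to\infty}\mu(t)/\mu(t-\tau(t))=1$. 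For the derivative ratio, a direct computation gives $\dot{\mu}(t)=1/[(t+3)\ln(t+3)]$, whence $\dot{\mu}(t)/\mu(t)=1/[(t+3)\ln(t+3)\ln\ln(t+3)]\to 0$.

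Substituting $r^{\star}=1$, $p=0$, and the two limit values $1$ and $0$ into (\ref{crq}), every auxiliary factor collapses and the inequality reduces to $(1/r_j)[\overline{f}_j(\xi)+\overline{g}_j(\xi)]/\xi_j<0$; since $r_j>0$ and $\xi_j>0$ this is precisely the hypothesis (\ref{log}). The general corollary then applies and returns $z_j(t)=O(\mu^{-1}(t))=O(\ln\ln(t+3)^{-1})$, the asserted Log-Log rate.

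I expect the delay-ratio limit to be the only genuinely delicate point. The key is that the double logarithm is so slowly varying that replacing the argument $t$ by its $\alpha$-th power only perturbs $\mu$ by the additive constant $\ln\alpha$ buried beneath the outer logarithm, which is negligible against the divergent term $\ln\ln t$. This is the exact analogue of the single-log estimate used for Log-stability, and it is what selects the admissible delay class $\tau(t)\le t-t^{\alpha}$; verifying it rigorously rather than merely heuristically is where the care is needed.
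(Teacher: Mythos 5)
Your proposal is correct and follows essentially the same route as the paper: choose $\mu(t)=\ln\ln(t+3)$ with $r^{\star}=1$, verify that $\mu(t)/\mu(t^{\alpha})\to 1$ and $\dot{\mu}(t)/\mu(t)\to 0$, and reduce (\ref{crq}) to (\ref{log}). The only difference is that you spell out the limit computations that the paper dismisses as ``simple calculations,'' which is a harmless (indeed helpful) elaboration.
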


\begin{proof}
Choose the function $\mu(t)=\ln\ln(t+3)$, and simple calculations show that
\begin{align*}
\lim\limits_{t\rightarrow \infty}\frac{\mu(t)}{\mu(t^{\alpha})}=1, ~~\mathrm{and}~~ \lim\limits_{t\rightarrow \infty}\frac{\dot{\mu}(t)}{\mu(t)}=0.
\end{align*}
Let $r^{\star}=1$, so the above inequality (\ref{crq}) is equivalent to the inequality of (\ref{log}). The proof is completed.
\end{proof}

\section{Numerical examples}\label{num}
We choose the following example given by \cite{FCJ2014b}:
\begin{align}\label{add}
\dot{x}(t)=f(x(t))+g(x(t-\tau(t))
\end{align}
where
\begin{align*}
f(x_{1},x_{2})=\left[
\begin{array}{c}
-5x_{1}^{3}+2x_{1}x_{2} \\
x_1^{2}x_{2}-4x_{2}^{2}
\end{array}
\right],
\end{align*}
and
\begin{align*}
g(x_{1},x_{2})=\left[
\begin{array}{c}
x_{1}x_{2} \\
2x_{1}^{4}
\end{array}
\right].
\end{align*}

In this system, it is easy to check that $f$ is cooperative and $g$ is nondecreasing; moreover, $f$ and $g$ are homogeneous of degree $p=2$ w.r.t the dilation map $\delta_{\lambda}^r(x)$ with $r=(1,2)$.

The difference of this example with that in \cite{FCJ2014b} is that the time delay obeys the following condition:
\begin{align*}
\tau(t)=t-t/\ln t
\end{align*}

Let $z_1(t)=x_{1}(t), z_2(t)=x_{2}^{1/2}(t)$. Then, after this transformation, the system (\ref{add}) turns to be
\begin{align*}
\left\{
\begin{array}{ll}
\dot{z}_1(t)&=-5z_1^{3}+2z_1(t)z_2^{2}(t)+z_1(t-\tau(t))z_2^{2}(t-\tau(t))\\
\dot{z}_2(t)&=\frac{1}{2z_2(t)}[z_1^{2}(t)z_2^{2}(t)-4z_2^{4}(t)+2z_1^{4}(t-\tau(t))]
\end{array} \right.
\end{align*}
therefore,
\begin{align*}
\overline{f}(z_{1},z_{2})=\left[
\begin{array}{c}
-5z_1^{3}+2z_1(t)z_2^{2}(t) \\
(z_1^{2}(t)z_2^{2}(t)-4z_2^{4}(t))/z_2(t)
\end{array}
\right],
\end{align*}
and
\begin{align*}
\overline{g}(z_{1},z_{2})=\left[
\begin{array}{c}
z_1(t)z_2^{2}(t) \\
2z_1^{4}(t)/z_2(t)
\end{array}
\right].
\end{align*}
Obviously, these functions are homogeneous with degree $2$ w.r.t. the standard dilation map.

If we choose $\mu(t)=\ln(t+1)$, $\xi=(1,1)$ and $r^{\star}=2$, then
\begin{align}
\lim\limits_{t\rightarrow \infty}\frac{\mu(t)}{\mu(t/\ln t)}=\lim\limits_{t\rightarrow \infty}\frac{\ln(t+1)}{\ln(t/\ln t+1)}=1,\\
\lim\limits_{t\rightarrow \infty}\frac{\dot{\mu}(t)}{\mu(t)^{1-1}}=\lim\limits_{t\rightarrow \infty}\frac{1}{1+t}=0,
\end{align}
and for $j=1,2$, the left inequality of (\ref{crp}) in Corollary 1 is $-4$ and $-1$ respectively. Therefore, the zero solution of positive nonlinear system (\ref{add}) can achieve the $\mathrm{Log}$-stability, i.e., $z_j(t)=O(\ln(t+1)^{-1}), j=1,2$, or equivalently,
\begin{align}\label{logs}
x_1(t)=O(\ln(t+1)^{-1}),~~~x_2(t)=O(\ln(t+1)^{-2}).
\end{align}

Choose the initial values as $(x_1(t),x_2(t))=(1,4)$ for $t\in (-\infty,0]$, then Fig. \ref{nn_ac} shows that the trajectories of the positive nonlinear systems converge to zero. Moreover, even though the initial value of $x_1$ is larger than $x_2$, however, from our above analysis, the variable $x_2(t)$ converges more quickly than $x_1(t)$, Fig. \ref{nn_ac} also shows this phenomenon. Moreover, from the equation (\ref{logs}), one can get that a better description of Log-stability is to plot the trajectories of $\ln(x_i(t)), i=1,2$ w.r.t. $\ln(\ln(t+1))$, see Fig. 2, from which one can see that our theoretical result does be a good approximation of the real convergence rate.

\begin{figure}
\begin{center}
\includegraphics[width=0.5\textwidth,height=0.3\textheight]{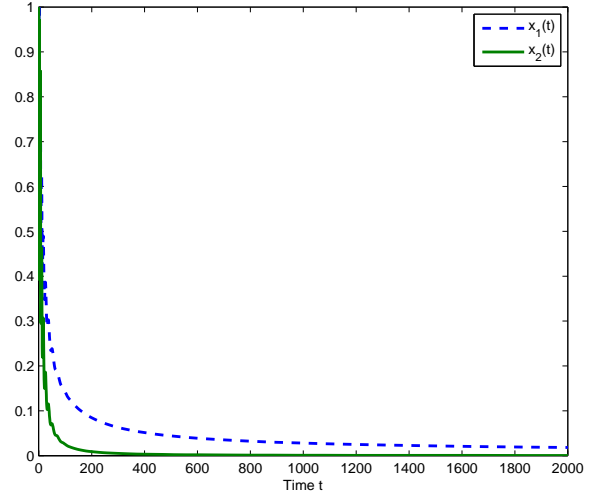}
\end{center}
\caption{Trajectories of $x_i(t), i=1, 2$ for positive nonlinear systems (\ref{add}) under unbounded time-varying delay $\tau(t)=t-t/\ln t$.} \label{nn_ac}
\end{figure}

\begin{figure}
\begin{center}
\includegraphics[width=0.5\textwidth,height=0.3\textheight]{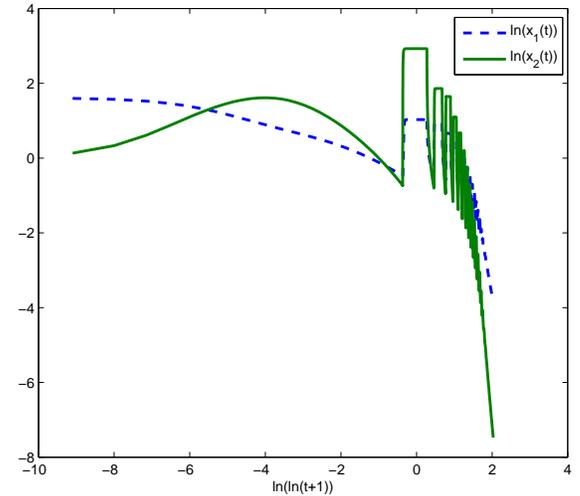}
\end{center}
\caption{Trajectories of $\ln(x_i(t)), i=1, 2$ w.r.t. $\ln(\ln(t+1))$} \label{logs}
\end{figure}

\section{Conclusions}\label{conclude}
In this note, the global $\mu$-stability of positive nonlinear systems with unbounded time-varying delays is revisited. We propose a novel transform, by which the positive nonlinear can be transformed to a system with homogeneous nonlinear functions w.r.t. the standard dilation map. Then, within this framework, we analyze various $\mu$-stability. Finally, a simple example is given to corroborate the effectiveness of the transform proposed in this note.

%


\begin{thebibliography}{99}

\bibitem{population}
J. Hofbauer and K. Sigmund, \emph{Evolutionary Games and Population Dynamics}. Cambridge, UK: Cambridge University Press, 1998.

\bibitem{chemical}
F. Viel, F. Jadot, and G. Bastin, ``Global stabilization of exothermic chemical reactors under input constraints,'' Automatica, vol. 33, no. 8, pp. 1437-1448, Aug. 1997.

\bibitem{JS1993}
J.~A. Jacquez and C.~P. Simon, ``Qualitative theory of compartmental systems,'' \emph{SIAM Rev.}, vol. 35, no. 1, pp. 43-79, Mar. 1993.

\bibitem{H1998}
J.~M. Van Den Hof, ``Positive linear observers for linear compartmental systems,'' \emph{SIAM J. Control Optim.}, vol. 36, no. 2, pp. 590-608, Mar. 1998.

\bibitem{HC2004}
W.~M. Haddad and V.~S. Chellaboina, ``Stability theory for nonnegative and compartmental dynamical systems with time delay,'' \emph{Syst. Control Lett.}, vol. 51, no. 5, pp. 355-361, Apr. 2004.

\bibitem{L1979}
D.~G. Luenberger, \emph{Introduction to Dynamic Systems: Theory, Models and Applications}. New York: John Wiley, 1979.

\bibitem{FR2000}
L. Farina and S. Rinaldi, \emph{Positive Linear systems: Theory and Applications.} New York: Wiley, 2000.

\bibitem{LA1980a}
R.~M. Lewis and B.~D.~O. Anderson, ``Insensitivity of a class of nonlinear compartmental systems to the introduction of arbitrary time delay,'' \emph{IEEE Trans. Circuits Syst.}, vol. 27, no. 7, pp. 604-612, Jul. 1980.

\bibitem{LA1980b}
R.~M. Lewis and B.~D.~O. Anderson, ``Necessary and sufficient condition for delay-independent stability of linear autonomous systems,'' \emph{IEEE Trans. Autom. Control}, vol. 25, no. 4, pp. 735-739, Aug. 1980.

\bibitem{R2009}
M.~A. Rami, ``Stability anaysis and synthesis for linear positive systems with time-varying delays,'' in \emph{Proc. 3rd Multidisciplinary Intern. Symposium on Positive Systems: Theory and Applications}, 2009, pp. 205-215.

\bibitem{LYW2010}
X.~W. Liu, W.~S. Yu and L. Wang, ``Stability analysis for continuous-time positive systems with time-varying delays,'' \emph{IEEE Trans. Autom. Control}, vol. 55, no. 4, pp. 1024-1028, Apr. 2010.

\bibitem{NNSM2008}
P.~H.~A. Ngoc, T. Naito, J.~S. Shin, and S. Murakami, ``On stability and robust stability of positive linear Volterra equations,'' \emph{SIAM J. Control Optim.}, vol. 47, no. 2, pp. 975-996, 2008.

\bibitem{NMNSN2009}
P.~H.~A. Ngoc, S. Murakami, T. Naito, J.~S. Shin, and Y. Nagabuchi, ``On positive linear Volterra-Stieltjes differential systems,'' \emph{Integ. Equ. Oper. Theory}, vol. 64, no. 3, pp. 325-355, Jul. 2009.

\bibitem{N2013}
P.~H.~A. Ngoc, ``Stability of positive differential systems with delay,'' \emph{IEEE Trans. Autom. Control}, vol. 58, no. 1, pp. 203-209, Jan. 2013.

\bibitem{N2006}
P.~H.~A. Ngoc, ``A Perron-Frobenius theorem for a class of positive quasi-polynomial matrices,'' \emph{Appl. Math. Lett.}, vol. 19, no. 8, pp. 747-751, Aug. 2006.

\bibitem{LYW2009}
X.~W. Liu, W.~S. Yu, and L. Wang, ``Stability analysis of positive systems with bounded time-varying delays,'' \emph{IEEE Trans. Circuits Syst. II, Exp. Briefs}, vol. 56, no. 7, pp. 600-604, Jul. 2009.

\bibitem{LL2013}
X.~W. Liu and J. Lam, ``Relationships between asymptotic stability and exponential stability of positive delay systems,'' \emph{Int. J. Gen. Syst.}, vol. 42, no. 2, pp. 224-238, 2013.

\bibitem{FCJ2013}
H.~R. Feyzmahdavian, T. Charalambous, and M. Johansson, ``Asymptotic stability and decay rates of positive linear systems with unbounded delays,'' in \emph{Proc. 52nd IEEE Conf. Decision and Control}, 2013, pp. 6337-6342.

\bibitem{MS2003}
O. Mason and R. Shorten, ``A conjecture on the existence of common quadratic Lyapunov functions for positive linear systems,'' in \emph{Proc. Amer. Control Conf.}, 2003, pp. 4469-4470.

\bibitem{FMC2009}
L. Fainshil, M. Margaliot, and P. Chigansky, ``On the stability of positive linear switched systems under arbitrary switching laws,'' \emph{IEEE Trans. Autom. Control}, vol. 54, no. 4, pp. 897-899, Apr. 2009.

\bibitem{LD2011}
X.~W. Liu and C.~Y. Dang, ``Stability analysis of positive switched linear systems with delays,'' \emph{IEEE Trans. Autom. Control}, vol. 56, no. 7, pp. 1684-1690, Jul. 2011.
%

\bibitem{H1967}
W. Hahn, \emph{Stability of Motion}. Berlin: Springer-Verlag, 1967.


\bibitem{AL2002}
D. Aeyels and P. De Leenheer, ``Extension of the Perron-Frobenius theorem to homogeneous systems,'' \emph{SIAM J. Control Optim.}, vol. 41, no. 2, pp. 563-582, Jul. 2002.

\bibitem{MV2009}
O. Mason and M. Verwoerd, ``Observations on the stability properties of cooperative systems,'' \emph{Syst. Control Lett.}, vol. 58, no. 6, pp. 461-467, Jun. 2009.

\bibitem{BMV2010}
V.~S. Bokharaie, O. Mason and M. Verwoerd, ``D-stability and delay-independent stability of homogeneous cooperative systems,'' \emph{IEEE Trans. Autom. Control}, vol. 55, no. 12, pp. 2882-2885, Dec. 2010.

\bibitem{FCJ2014a}
H.~R. Feyzmahdavian, T. Charalambous, and M. Johansson, ``Exponential stability of homogeneous positive systems of degree one with time-varying delays,'' \emph{IEEE Trans. Autom. Control}, vol. 59, no. 6, pp. 1594-1599, Jun. 2014.

\bibitem{FCJ2014b}
H.~R. Feyzmahdavian, T. Charalambous, and M. Johansson, ``Asymptotic stability and decay rates of homogeneous positive systems with bounded and unbounded delays,'' \emph{SIAM J. Control Optim.}, vol. 52, no. 4, pp. 2623-2650, 2014.

\bibitem{CW2007a}
T.~P. Chen and L.~L. Wang, ``Power-rate global stability of dynamical systems with unbounded time-varying delays,'' \emph{IEEE Trans. Circuits Syst. II, Exp. Briefs}, vol. 54, no. 8, pp. 705-709, Aug. 2007.

\bibitem{CW2007b}
T.~P. Chen and L.~L. Wang, ``Global $\mu$-stability of delayed neural networks with unbounded time-varying delays,'' \emph{IEEE Trans. Neural Netw.}, vol. 18, no. 6, pp. 1836-1840, Nov. 2007.

\bibitem{Chen2001}
T.~P. Chen, ``Global exponential stability of delayed hopfield neural networks,'' \emph{Neural Netw.}, vol. 14, no. 8, pp. 977-980, Oct. 2001.

\bibitem{LC2008}
X.~W. Liu and T.~P. Chen, ``Robust $\mu$-stability for uncertain stochastic neural networks with unbounded time-varying delays,'' \emph{Physica A}, vol. 387, no. 12, pp. 2952-2962, May 2008.

\bibitem{LLC2011}
B. Liu, W.~L. Lu, and T.~P. Chen, ``Generalized halanay inequalities and their applications to neural networks with unbounded time-varying delays,'' \emph{IEEE Trans. Neural Netw.}, vol. 22, no. 9, pp. 1508-1513, Sep. 2011.

\bibitem{WC2012}
L.~L. Wang and T.~P. Chen, ``Complete stability of cellular neural networks with unbounded time-varying delays,'' \emph{Neural Netw.}, vol. 36, pp. 11-17, Dec. 2012.

\bibitem{CWZ08}
T.~P. Chen, W. Wu, and W.~J. Zhou, ``Global $\mu$-synchronization of linearly coupled unbounded time-varying delayed neural networks with unbounded delayed coupling,'' \emph{IEEE Trans. Neural Netw.}, vol. 19, no. 10, pp. 1809-1816, Oct. 2008.

\bibitem{LLC2010}
X.~W. Liu, W.~L. Lu, and T.~P. Chen, ``Consensus of multi-agent systems with unbounded time-varying delays,'' \emph{IEEE Trans. Autom. Control}, vol. 55, no. 10, pp. 2396-2401, Oct. 2010.

\bibitem{S1995}
H. Smith, \emph{Monotone Dynamical Systems: An Introduction to the Theoty of Competitive and Cooperative Systems.} Providence, RI: AMS, 1995.
\end{thebibliography}
\end{document}